\DeclareMathAlphabet{\mathpzc}{OT1}{pzc}{m}{it}
\newtheorem{theorem}{Theorem}[section]
\newtheorem{lemma}[theorem]{Lemma}
\newtheorem{proposition}[theorem]{Proposition}
\theoremstyle{definition}
\newtheorem{definition}[theorem]{Definition}
\newtheorem{problem}[theorem]{Problem}
\newtheorem{remark}[theorem]{Remark}
\numberwithin{equation}{section}
\newtheorem{corollary}[theorem]{Corollary}
\newtheorem{example}[theorem]{Example}
\begin{document}
\title[Strongly Pseudoradial Spaces]{Strongly Pseudoradial Spaces}
\author{Jeremy Brazas and Paul Fabel}

\begin{abstract}
The ``weakly Hausdorff" property for pseudoradial spaces fails to be
naturally characterized by unique convergence of transfinite sequences. In
response, we develop the category $\mathbf{SPsRad}$ of strongly pseudoradial
spaces, compactly generated spaces whose closed sets are determined by
globally continuous maps from well-ordered spaces. Categorically $\mathbf{%
SPsRad}$ is the coreflective hull of the class of well-ordered spaces, and $%
\mathbf{SPsRad}$ is Cartesian closed. The strongly pseudoradial weakly
Hausdorff spaces admit a natural characterization involving unique
extensions of injective maps of well-ordered spaces. We also obtain analogs
in $\mathbf{SPsRad}$ of the fact that for sequential spaces, sequential
compactness is equivalent to countable compactness.
\end{abstract}

\maketitle

\section{\textbf{Introduction}}

This paper introduces a category \textbf{SPsRad }of\textbf{\ strongly
pseudoradial spaces, }a natural generalization of sequential spaces. The
space $X$ is strongly pseudoradial if for each nonclosed set $A\subset X,$
there exists (with the order topology), a noncompact well ordered space $%
\alpha $\textbf{\ }and a map $f:\alpha \cup \{\infty _{\alpha }\}\rightarrow
X$ so that $f(\alpha )\subset A$ and $f(\infty _{\alpha })\notin A.$ Here $%
\alpha \cup \{\infty _{\alpha }\}$ denotes the familiar one point
compactification of $\alpha $.

Our motivation is the observation that the following fact about sequential
spaces does not (as shown in Example \ref{kcnou}) generalize naturally to
pseudoradial spaces. If $X$ is a sequential space then each convergent
sequence in $X$ has a unique limit if and only if each compact subspace of $%
X $ is closed. On the other hand, this fact generalizes naturally to
strongly pseudoradial spaces (Corollary \ref{sutc}).

Recall the space $X$ is \textbf{sequential} \cite{Franklin1}\cite{Franklin2}
if for each nonclosed $A\subset X$ there exists a convergent sequence $%
a_{n}\rightarrow x$ so that $\{a_{n}\}\subset A$ and $x\notin A,$ $X$ is 
\textbf{compactly generated} (\textbf{CG}) \cite{Steenrod}\cite{Strickland}
if for each nonclosed $A\subset X$ there exists a compact Hausdorff space $K$
and a map $f:K\rightarrow X$ such that $f(K)\cap A$ is not closed in $X,$
and $X$ is \textbf{pseudoradial} (\textbf{PsRad}) \cite{Arc}\cite{Howes}\cite%
{Tironi} if for each nonclosed $A\subset X$ there exists an unbounded
well-ordered set $\alpha ,$ a function $f:\alpha \cup \{\infty _{\alpha
}\}\rightarrow A$ so that $f$ is continuous at $\infty _{\alpha },$ $%
f(A)\subset X,$ and $f(\infty _{\alpha })\notin A.$ Thus each sequential
space is strongly pseudoradial, and every strongly pseudoradial space is
both pseudoradial and compactly generated. Example \ref{strict1} shows a
compactly generated pseudoradial space need not be strongly pseudoradial.

If the sequential space $X$ is $T_{1},$ then nonclosed sets are detected by
convergent sequences of distinct points. The natural generalization (Theorem %
\ref{sprchar}) in \textbf{SPsRad }employs injective maps of regular
cardinals with the order topology.

Recall the space $X$ is \textbf{US} if convergent sequences in $X$ have
unique limits, $X$ is \textbf{WH} (i.e. weakly Hausdorff), if maps of
compact $T_{2}$ spaces into $X$ have closed image \cite{Steenrod}\cite%
{Strickland}, and $X$ is a \textbf{KC space} if compact subsets of $X$ are
closed. For general spaces $T_{2}\Rightarrow KC\Rightarrow WH\Rightarrow
US\Rightarrow T_{1},$ and the implications are strict \cite{Wil}. If $X$ is 
\textbf{CG}, then $WH\Longleftrightarrow KC$ and if $X$ is sequential then $%
US\Longleftrightarrow WH\Longleftrightarrow KC.$ For strongly pseudoradial
spaces, the natural generalization of the $US$ property is \textbf{unique
strong pseudoradial convergence:} for each map $f:\alpha \rightarrow X$ of a
non-compact well-ordered space $\alpha ,$ there is at most one continuous
extension to $\alpha \cup \{\infty _{\alpha }\}.$ As noted, we show a
strongly pseudoradial space $X$ is $KC$ if and only if it has unique strong
pseudoradial convergence. This follows from Theorem \ref{closedmap} which
asserts that maps of compact well-ordered spaces into such spaces $X$ have
closed image. Example 3 shows this result does not translate naturally to
the traditional pseudoradial category.

In contrast with the fact that the category $\mathbf{PsRad}$ of pseudoradial
spaces fails to be Cartesian closed \cite{Cincura}, the Convenient
categorical properties (in the sense of \cite{Steenrod}) of $\mathbf{SPsRad}$
are established in section \ref{cat}. The examples in section \ref{examples}
illustrate various ways in which the main results are best possible, and
also establish strict relationships among various categories under
consideration.

Theorem \ref{compactthm} is an analog (or generalization modulo Lemma \ref%
{adjust}) of the fact that for sequential spaces, countable compactness is
equivalent to sequential compactness. However in $\mathbf{SPsRad}$ the
relationship between compactness and related notions quickly leads to deep
waters in axiomatic set theory, even for first countable countably compact
spaces \cite{EisworthRoitman}\cite{Eisworth}\cite{EisworthNyikos}\cite{OSt},
(and much more generally for pseudoradial spaces \cite{Dow}\cite{Dow1}\cite%
{Dow2} \cite{OSt}\cite{Shap}\cite{Sleziak}\cite{Tironi}.) For example, in $%
\mathbf{SPsRad}$ a natural analog of ``sequentially compact'' is \textbf{%
strongly pseudoradially compact} ($\mathbf{SPC}$). The space $X$ has the
latter property if for each noncompact well-ordered space $\alpha $ and each
map $f:\alpha \rightarrow X,$ there exists $\beta \subset \alpha ,$ closed
and cofinal, so that $f|\beta $ is continuously extendable to the one point
compactification $\beta \cup \{\infty _{\beta }\}.$ By inspection this
notion is equivalent to compactness for well ordered spaces, but in general
the notions are inequivalent, and plausibly unrelated. Assuming Jensen's
combinatorial diamond principle $\diamondsuit ,$ Ostaszewski spaces \cite%
{OSt}\cite{Nyikos} exist, and are noncompact strongly pseudoradial spaces
satisfying $\mathbf{SPC}$. On the other hand in $\mathbf{SPsRad}$ every
compact weakly Hausdorff space is $\mathbf{SPC}$ (Corollary \ref{compactcor}%
). The authors were unable to settle whether all compact, $T_{1}$ $\mathbf{%
SPsRad}$ spaces are $\mathbf{SPC}$ (Problem \ref{t1prob}).

\section{Definitions and preliminaries}

By a \textbf{well-ordered space} $X$, we mean a well-ordered set with the 
\textbf{order topology} generated by half open intervals $(a,b]=\{x\in
X|a<x\leq b\}$ (and by closed intervals $[m,b]$ if $m$ is minimal in $X$).

If $\alpha $ is a noncompact well-ordered space $\alpha \cup \{\infty
_{\alpha }\}$ denotes the (unique) well-ordered one point compactification
of $\alpha ,$ (i.e. the compact space obtained by attaching a maximal point $%
\infty _{\alpha }$ to $\alpha $ whose basic neighborhoods are of the form $%
(a,\infty _{\alpha }].)$

\begin{remark}
\emph{\ If $\alpha $ is a noncompact well-ordered space and $\beta \subset
\alpha $ then the subspace topology of $\beta $ coincides with the order
topology of $\beta $ iff $\beta $ is a compact subspace of $\alpha \cup
\{\infty _{\alpha }\}$ or (if $\beta $ is not compact and taking closure in $%
\alpha \cup \{\infty _{\alpha }\}$), sup$\beta =\overline{\beta }\backslash
\beta .$}
\end{remark}

If $\alpha $ is a well-ordered set a subset $K\subset \alpha $ is \textbf{%
cofinal} if for each $x\in \alpha $ there exists $k\in K$ so that $x\leq k.$ 
$K$ is an \textbf{initial segment} if $K=\alpha $ or $K=[0,x)=\{y\in \alpha
|0\leq y<x\}$ for some $x\in \alpha .$

If $A$ and $B$ are sets then $\left\vert A\right\vert <\left\vert
B\right\vert $ means there exists an injection from $A$ into $B$ and no
injection is surjective, and $\left\vert A\right\vert =\left\vert
B\right\vert $ means there exists a bijection from $A$ onto $B.$ An \textbf{%
ordinal }is a well-ordered set. A \textbf{cardinal} $\alpha $ is a
well-ordered set so that $\left\vert \beta \right\vert <\left\vert \alpha
\right\vert $ for every proper initial segment $\beta .$ A \textbf{regular
cardinal} $\alpha $ is a cardinal such that if $\beta \subset \alpha $ and $%
\beta $ is cofinal then there is exists an order preserving bijection from $%
\beta $ onto $\alpha .$ (We are formally ignoring the standard notion
`cofinality').

If $A$ and $B$ are subsets of the linearly ordered set $(S,<)$ the notation $%
A<B$ means $a<b$ for all $a\in A$ and all $b\in B.$

By the \textbf{directed topology} on the well-ordered set $\alpha \cup
\{\infty _{\alpha }\}$ we mean the (generally finer) space with topology
generated by sets $\{a\}$ and $(a,\infty _{\alpha }]$, with $a\in \alpha .$
The space $X$ is \textbf{strongly pseudoradial} if for each nonclosed $%
A\subset X$ there exists a noncompact well-ordered space $\alpha $ and a map 
$f:\alpha \cup \{\infty _{\alpha }\}\rightarrow X$ so that $f(\alpha
)\subset A$ and $f(\infty _{\alpha })\notin A.$ If the same conclusion holds
with respect to the directed topology on $\alpha \cup \{\infty _{\alpha }\}$%
, we obtain the generally weaker property that $X$ is \textbf{pseudoradial. }%
The space $X$ has \textbf{unique strong pseudoradial convergence} if $%
f(\infty _{\alpha })=g(\infty _{\alpha })$ for all noncompact well-ordered
spaces $\alpha ,$ and all pairs of maps $f,g:\alpha \cup \{\infty _{\alpha
}\}\rightarrow X$ such that $f|\alpha =g|\alpha .$ If the same conclusion
holds with respect to the directed topology on $\alpha \cup \{\infty
_{\alpha }\}$ we obtain the generally stronger property that $X$ has \textbf{%
unique pseudoradial convergence.}

A space $X$ is \textbf{sequentially compact} if each sequence in $X$ has a
convergent subsequence and $X$ is \textbf{countably compact} if each
countable open cover of $X$ has a finite subcover.

\section{Characterizing strongly pseudoradial spaces}

The main result of this section (Corollary \ref{big}) generalizes to
strongly pseudoradial spaces the following fact about sequential spaces: the
space $X$ is sequential iff for each nonclosed $A\subset X$ there exists $%
a\in A$ such that $\overline{\{a\}}\backslash A\neq \emptyset $ or there
exists a convergent sequence of \textit{distinct} points $a_{n}\rightarrow b$
so that $\{a_{1},a_{2},...\}\subset A$ and $b\notin A$ (i.e. there exists a
continuously extendable injective map $f:\alpha \rightarrow A$ of the
regular cardinal $\omega =\{1,2,3...\},$ so that $f(\infty _{\omega })\notin
A).$

The following Lemmas are well known or straightforward. While likely
apparent to the reader proficient in elementary set theory \cite{Hrb}, care
is needed to ensure extra topological or structural conditions are met. For
example, in Lemma \ref{mono} merely invoking the axiom of choice does not
guarantee the useful extra properties that $\beta $ is closed or cofinal.

\begin{lemma}
\label{mono}Suppose $\alpha $ is a noncompact well-ordered space and $%
\{A_{i}\}$ is a partition of $\alpha $ into bounded sets (indexed by a set $%
I $). Then there exists a closed cofinal subspace $\beta \subset \alpha $ so
that $\left| A_{i}\cap \beta \right| \leq 1$ for all $i\in I.$
\end{lemma}

\begin{proof}
If $x\in \alpha $ let $A_{i_{x}}$ denote the (unique) element of $\{A_{i}\}$
so that $x\in A_{i_{x}}.$ Let $0$ denote the minimal element of $\alpha .$
Let $\beta _{0}=\{0\}.$

Suppose $x\in \alpha \cup \{\infty _{\alpha }\}$ and $\beta _{a}\subset
\alpha $ has been defined for all $a<x$. Let $\gamma _{x}=\cup _{a<x}\beta
_{a}.$ Let $m_{x}=\sup \gamma _{x}$ in the space $\alpha \cup \{\infty
_{\alpha }\}.$ Suppose the following three conditions hold: i) $x\leq m_{x}$
ii) if $\{a,b\}\subset \gamma _{x}$ and $a<b$ then $A_{i_{a}}<\{b\},$ iii) $%
\gamma _{x}\cup \{m_{x}\}$ is compact.

Observe conditions (i) (ii) and (iii) are true in case $x=0.$ If $x>0$
proceed as follows.

Case 1. If $m_{x}=\infty _{\alpha }$ let $\beta =\gamma _{x}$ and the
theorem is proved by conditions (ii) and (iii) and the fact that $\gamma
_{x} $ is unbounded in $\alpha .$

Case 2. Suppose $m_{x}<\infty _{\alpha }.$ Obtain $r_{x}\in \alpha $ minimal
so that $A_{i_{m_{x}}}<r_{x}.$ Define $\beta _{x}=\gamma _{x}\cup
\{m_{x}\}\cup \{r_{x}\}.$ Observe $\beta _{x}$ is compact since $\gamma
_{x}\cup \{m_{x}\}$ is compact (by the induction hypothesis). Note $m_{x}\in
A_{i_{m_{x}}}$ and hence $m_{x}<r_{x}.$ Note $\gamma _{x+1}=\beta _{x}$ and $%
m_{x+1}=r_{x}.$

We now check conditions (i) to (iii) are preserved at the index $x+1.$ To
check iii) recall $\gamma _{x+1}\cup \{m_{x+1}\}=\beta _{x},$ which is
compact, as shown above. To check i) recall $m_{x+1}=r_{x}$ and $m_{x}<r_{x}$
as shown above. By hypothesis $x\leq m_{x}$ and thus $x+1\leq m_{x}+1\leq
r_{x}=m_{x+1}$. To check ii) suppose $\{a,b\}\subset \gamma _{x+1}$ and $%
a<b. $ Recall $\gamma _{x}\leq m_{x}<r_{x}.$ If $\gamma _{x}$ is compact
then $m_{x}\in \gamma _{x}.$ If $b\in \gamma _{x}$ then $a\in \gamma _{x}$
and $A_{i_{a}}<b$ by the induction hypothesis. If $b=m_{x}$ and $\gamma _{x}$
is not compact obtain $c$ so that $a<c<b$ and thus by the induction
hypothesis $A_{i_{a}}<c<b.$ Suppose $b=r_{x}.$ If $a<m_{x}$ then $%
A_{i_{a}}<m_{x}<b$ and if $a=m_{x}$ then $A_{i_{m_{x}}}<r_{x}$ by definition
of $r_{x}.$ In case $x=\infty _{\alpha }$ we achieve Case 1.
\end{proof}

\begin{lemma}
\label{setbij}Suppose $\alpha $ and $J$ are unbounded well-ordered sets and $%
h:\alpha \rightarrow J$ is a bijection. There exists a subset $K\subset
\alpha $ so that $h|K$ is order preserving and $h(K)$ is cofinal in $J.$
\end{lemma}

\begin{proof}
Manufacture $K$ as follows. Let $k_{1}$ denote the minimal element of $%
\alpha .$ Let $K_{1}=\{k_{1}\}.$ Observe $h|K_{1}$ is order preserving and $%
K_{1}<h^{-1}(y)$ if $h(K_{1})<\{y\}.$ Observe $[k_{1},\infty )\cap K_{1}\neq
\emptyset .$ Suppose $i\in \alpha \cup \infty $ and $K_{j}$ is defined for
each $j<i.$ Suppose $h|(\cup _{j<i}K_{j})$ is order preserving and $\cup
_{j<i}K_{j}<h^{-1}(y)$ if $h(\cup _{j<i}K_{j})<\{y\}.$ Suppose $[j,\infty
)\cap K_{j}\neq \emptyset $ for each $j<i.$ Case 1. If $h(\cup _{j<i}K_{j})$
is cofinal let $K=\cup _{j<i}K_{j}$ and observe the Lemma is proved. Case 2.
If $h(\cup _{j<i}K_{j})$ is not cofinal, by hypothesis there exists $%
k_{i}\in \alpha $ minimal so that $\cup _{j<i}K_{j}<\{k_{i}\}$ and $h(\cup
_{j<i}K_{j})<\{h(k_{i})\}.$ Let $K_{i}=\cup _{j<i}K_{j}\cup \{k_{i}\}.$
Observe $h|K_{i}$ is order preserving. Minimality of $k_{i}$ ensures $%
K_{i}<h^{-1}(y)$ if $h(K_{i})<\{y\}.$ By the induction hypothesis $j<k_{i}$
for each $j<i.$ Thus $i\leq k_{i}.$ Case 1 is reached by the time $\alpha
=\infty .$
\end{proof}

\begin{lemma}
\label{ezreg}If $\alpha $ is an unbounded well-ordered set and $\beta $ has
minimal order type among ordinals $\beta \subset \alpha $ such that $\beta $
is cofinal in $\alpha $ then $\beta $ is a regular cardinal.
\end{lemma}

\begin{proof}
Given well-ordered sets $\gamma $ and $\beta $ we say $\gamma \prec \beta $
if $\gamma $ can be embedded as a proper initial segment of $\beta $ and $%
\gamma \preceq \beta $ if $\gamma \prec \beta $ or $\gamma $ is isomorphic
to $\beta .$ To obtain a contradiction obtain $\alpha $ of minimal order
type so that the result fails.

If $\beta $ $\prec \alpha $ then obtain a regular cardinal $\gamma $ cofinal
in $\beta .$ Observe $\gamma \preceq \beta $ and $\gamma $ is cofinal in $%
\alpha $. By minimality of $\beta $ we conclude $\gamma $ is isomorphic to $%
\beta $ and thus $\beta $ is a regular cardinal, a contradiction.

By definition if $\beta $ is isomorphic to $\alpha $ and $\alpha $ is a
cardinal then $\alpha $ is a regular cardinal. If $\alpha $ is not a
cardinal obtain $x\in \alpha $ minimal so that $\left\vert [0,x)\right\vert
=\left\vert \alpha \right\vert .$ Obtain a bijection $h:[0,x)\rightarrow
\alpha $ and apply Lemma \ref{setbij} to obtain $K\subset \lbrack 0,x)$ so
that $h|K$ is order preserving and cofinal in $\alpha .$ Since $K\prec
\alpha $ and $\alpha $ is a minimal counterexample, obtain a

regular cardinal $\gamma $ cofinal in $K$ and note $h|\gamma $ is cofinal in 
$\alpha $ and we have a contradiction since $\gamma \prec \alpha .$
\end{proof}

We can easily ensure that cofinal sets are closed in the order topology.

\begin{lemma}
\label{reg}Suppose $\alpha $ is a well-ordered space. There exists a closed
cofinal subspace $\beta \subset \alpha $ so that $\beta $ is a regular
cardinal.
\end{lemma}

\begin{proof}
Obtain by Lemma \ref{ezreg} a (possibly non closed) cofinal $\gamma \subset
\alpha $ so that $\gamma $ is a regular cardinal. Let $\partial \gamma =%
\overline{\gamma }\backslash \gamma .$ For each $x\in \partial \gamma $
obtain $k_{x}\in \gamma $ minimal so that $x<k_{x}.$ Let $K$ denote the
union of such points $k_{x}.$ Let $\beta =\overline{\gamma }\backslash K.$
Note $K$ is open in the space $\alpha $ and hence $\beta $ is closed. The
order preserving bijection $\beta \rightarrow \gamma $ fixing $\gamma
\backslash K$ pointwise and sending $x\in \partial \gamma $ to $k_{x}\in K$
shows $\beta $ has the same order type as $\gamma ,$ and thus $\beta $ is a
regular cardinal. Since $\gamma $ is cofinal in $\alpha $, $\gamma
\backslash K$ is cofinal in $\alpha $ (since for each $k_{x}\in K$ the next
element of $\gamma $ is not in $K.)$ Thus $\beta $ is cofinal in $\alpha .$
\end{proof}

In similar manner to $T_{1}$ sequential spaces, we can detect nonclosed sets
in a $T_{1}$ strongly pseudoradial space with extendable injective maps of
regular cardinals.

\begin{theorem}
\label{sprchar}A space $X$ is strongly pseudoradial iff for each nonclosed $%
A\subset X$ there exists $a\in A$ such that $\overline{\{a\}}\backslash
A\neq \emptyset $ or there exists (with the order topology) a regular
cardinal $\gamma $ and a continuous injection $\kappa :\gamma \cup \{\infty
_{\gamma }\}\rightarrow X$ such that $\kappa (\gamma )\subset A$ and $\kappa
(\infty _{\gamma })\notin A.$
\end{theorem}

\begin{proof}
Suppose $X$ is strongly pseudoradial and $A\subset X$ is not closed. Suppose
there does not exist $a\in A$ and $b\notin A$ so that $b\in \overline{\{a\}}%
. $ Obtain a noncompact ordinal $\beta $ and a map $f:$ $\beta \cup \{\infty
_{\beta }\}\rightarrow X$ such that $f(\beta )\subset A$ and $f(\infty
_{\beta })\notin A.$ For $a\in A$ let $S_{a}=f^{-1}(a)\subset \beta .$ Note $%
S_{a}$ is bounded since otherwise continuity of $f$ at $\infty _{\beta }$
shows $f(\infty _{\beta })$ is a limit point of the singleton $\{a\}.$ Thus
the sets $\{S_{a}\}$ form a partition of $\alpha $ into bounded sets. By
Lemma \ref{mono} there exists a closed cofinal set $\alpha \subset \beta $
such that $f|\alpha $ is one to one. By Lemma \ref{reg} there exists a
closed and cofinal regular ordinal $\gamma \subset \alpha $. Let $\kappa
=f|\gamma .$

For the converse suppose $A\subset X$ is not closed. If there exists $a\in A$
and $b\notin A$ such that $b\in \overline{\{a\}}$ employ the map of $\omega
\cup \{\infty _{\omega }\}$ sending $n\rightarrow a$ and $\infty _{\omega
}\rightarrow b.$ If no such $a$ exists, employ the advertised map $\kappa $
and conclude that $X$ is strongly pseudoradial.
\end{proof}

As one might expect, $\left| X\right| $ is the lower bound on the size of
cardinals needed to detect nonclosed sets in the $T_{1}$ strongly
pseudoradial space $X.$

\begin{corollary}
\label{big}Suppose $X$ is a space and $\alpha $ is the cardinal such that $%
\left| X\right| =\left| \alpha \right| .$ Then $X$ is strongly pseudoradial
iff for each nonclosed $A\subset X$ there exists $a\in A$ such that $%
\overline{\{a\}}\backslash A\neq \emptyset $ or there exists (with the order
topology) a regular cardinal $\beta $ (with $\left| \beta \right| \leq
\left| \alpha \right| $) and a continuous injection $h:\beta \cup \{\infty
_{\beta }\}\rightarrow X$ such that $h(\beta )\subset A$ and $h(\infty
_{\beta })\notin A.$
\end{corollary}

\section{Unique strong pseudoradial convergence}

A sequential space $X$ is a $US$ space iff $X$ is a $KC$ space. Corollary %
\ref{sutc} provides a strong analogue in the \textbf{SPsRad} category.
Example \ref{kcnou} shows the natural analogue fails in the \textbf{PsRad}
category.

\begin{theorem}
\label{closedmap}Suppose $X$ is strongly pseudoradial and has unique strong
pseudoradial convergence. Suppose $\alpha $ is a noncompact well-ordered
space and $h:\alpha \cup \{\infty _{\alpha }\}\rightarrow X$ is a continuous
injection. Then $h(\alpha \cup \{\infty _{\alpha }\})$ is closed in $X.$
\end{theorem}

\begin{proof}
To obtain a contradiction suppose the claim is false. Obtain an ordinal of
minimal order type of the form $\alpha \cup \{\infty _{\alpha }\}$ such that 
$Y=h(\alpha \cup \{\infty _{\alpha }\})$ is not closed in $X,$ but so that $%
h([0,a])$ is closed in $X$ for all $a\in \alpha .$ Since $X$ is $T_{1}$ and
strongly pseudoradial, and since $Y$ is not closed in $X,$ obtain a
noncompact ordinal $\beta $ and a continuous injection $g:\beta \cup
\{\infty _{\beta }\}\rightarrow X$ so that $g(\beta )\subset Y$ and $%
g(\infty _{\beta })\notin Y$. If $h(\infty _{\alpha })\notin g(\beta )$ let $%
\gamma =\beta .$ If $h(\infty _{\alpha })\in g(\beta )$ let $\gamma =\beta
\backslash \lbrack 0_{\beta },g^{-1}h(\infty _{\alpha })]$. Define $\kappa
:\gamma \cup \{\infty _{\beta }\}\rightarrow Y$ so that $\kappa |\gamma
=g|\beta $ and define $\kappa (\infty _{\beta })=h(\infty _{\alpha }).$ Note
the injection $g|(\gamma \cup \{\infty _{\beta }\})$ is continuous. Thus,
since $X$ has strong unique transfinite convergence, $\kappa $ is not
continuous. Since $\kappa |\gamma $ is continuous, $\kappa $ is not
continuous at the point $\infty _{\beta }.$ Thus there exists an open set $%
U\subset X$ so that $h(\infty _{\alpha })\in U$ and $(b,\infty _{\beta
}]\backslash \kappa ^{-1}(U)\neq \emptyset $ for all $b\in \gamma .$ Since $%
\kappa (\infty _{\beta })\in U,$ $(b,\infty _{\beta }]\backslash \kappa
^{-1}(U)=(b,\infty _{\beta })\backslash \kappa ^{-1}(U).$ Hence, since $%
\kappa |\gamma $ is continuous, $(b,\infty _{\beta })\backslash \kappa
^{-1}(U)$ is a nonempty open subspace of $\gamma $ for all $b\in \gamma .$
Let $K=\gamma \backslash \kappa ^{-1}(U).$ Observe $K$ is a noncompact
closed subspace of $\gamma ,\kappa (K)\cap U=\emptyset ,$ and $K\cup
\{\infty _{\beta }\}$ is compact. Since $h$ is continuous at $\infty
_{\alpha },$ there exists $a\in \alpha $ so that $h(i)\in U$ if $a<i.$ Hence 
$\kappa (K)\subset h([0_{\alpha },a]).$ By hypothesis $h([0_{\alpha },a])$
is closed in $X.$ The injective map $g|(K\cup \{\infty _{\beta }\})$ shows $%
g(\infty _{\beta })$ is a limit point of $g(K)$ and thus $g(\infty _{\beta
}) $ is in the closed set $h([0,a]).$ This contradicts the fact that $%
g(\infty _{\beta })\notin Y=h(\alpha \cup \{\infty _{\alpha }\}).$
\end{proof}

\begin{corollary}
\label{sutc}Suppose $X$ is strongly pseudoradial. The following are
equivalent:
\end{corollary}

\begin{enumerate}
\item $X$ has unique strong pseudoradial convergence,

\item If $C$ is a compact well-ordered space and $h:C\rightarrow X$ is a
continuous injection then $h$ is a closed embedding,

\item $X$ is weakly Hausdorff,

\item $X$ is a $KC$ space.
\end{enumerate}

\begin{proof}
By definition (4) $\Rightarrow$ (3) $\Rightarrow$ (2) for all spaces $X$. To show (2) $\Rightarrow$ (4) suppose $A\subset X$ is not closed. By Theorems \ref{sprchar}
and \ref{closedmap} obtain a noncompact ordinal $\alpha $ and a closed
embedding $\kappa :\alpha \cup \{\infty _{\alpha }\}\rightarrow X$ so that $%
\kappa (\alpha )\subset A$ and $\kappa (\infty _{\alpha })\notin A.$ Since $%
\kappa $ is a closed map, $\kappa (\alpha )=im(\kappa )\cap A$ is a closed
subspace of the space $A.$ Since the closed subspace $\kappa (\alpha )$ of $%
A $ is not compact, $A$ is not a compact space.

(1) $\Rightarrow$ (2) follows directly from Theorem \ref{closedmap}. If (1) is
false obtain a noncompact ordinal $\alpha $ and an extendable continuous
injection $f:\alpha \rightarrow X$ with nonunique extensions $\infty
_{\alpha }\rightarrow x$ and $\infty _{\alpha }\rightarrow y$ with $x\neq y.$
Then $y$ is a limit point of $f(\alpha )\cup \{x\}$ and hence (2) fails. Thus (2) $\Rightarrow$ (1).
\end{proof}

\section{\label{examples}Examples and counterexamples}

The examples in this section illustrate various ways in which the main
results of this paper are best possible, and also establish $\mathbf{SPsRad}$
is a proper subcategory of $\mathbf{PsRad}\cap \mathbf{CG}.$

The $1\Rightarrow 2$ implication of Corollary \ref{sutc} yields closed
embeddings for pseudoradial spaces. Given 1, if we drop the pseudoradial
hypothesis we could hope in principle to obtain embeddings that are not
necessarily closed. The following example, obtained by attaching an extra
point to the Arens-Fort space shows this is generally hopeless.

\begin{example}
\emph{\ There exists a $US$ space $Y$ so that with the order topology on $%
\omega\cup \{\infty_{\omega} \},$ there exists a continuous bijection $%
h:\omega\cup \{\infty_{\omega} \}\rightarrow Y$ so that $h$ is \textbf{not}
a homeomorphism.}
\end{example}

\begin{proof}
The well-known Arens-Fort space \cite[Example 26]{Steen} is a countable space with a single non-isolated point, such that no sequence of isolated points converges to the non-isolated point. In particular, we may take the Arens-Fort space to be the natural numbers $X=\{1,2,3....\}$ so that each
single-point set $\{2\},\{3\},....$ is open, $\{1\}$ is a limit point of $\{2,3,4....\}$,
and such that no sequence in $\{2,3,4,...\}$ converges to $1.$ Let $Y=X\cup \{\infty \}$ be the space obtained by attaching an extra point to $X$ with basic open sets $\{\infty,n,n+1,....\}$ at the added point. Note that $Y$ is a non-Hausdorff, US space. The natural identity function $id:\omega \cup \{\infty _{\omega}\}\rightarrow Y$ is a continuous bijection but not a homeomorphism.
\end{proof}

Corollary \ref{big} requires that the cardinal $\alpha $ satisfies $\left|
\alpha \right| =\left| X\right| ,$ and this cannot be relaxed, even if $%
\alpha $ is a non-regular cardinal, as shown in the following example.

\begin{example}
\emph{\ Let $\alpha $ be a non-regular cardinal with the order topology (for
example let $\alpha =\cup \alpha _{n},$ the union of nested regular
cardinals $\alpha _{1}<\alpha _{2}<...$). Let $X=\alpha \cup \{\infty
_{\alpha }\}.$ Observe there is no limit to the length of proper initial
segments of $\alpha $ needed to detect nonclosed sets $A\subset X.$ Thus $%
\alpha $ is the minimal cardinal which makes Corollary \ref{big} true,
despite the fact that no nonclosed set $A\subset X$ requires $\alpha \cup
\{\infty _{\alpha }\}$ to detect the failure of $A$ to be closed.}
\end{example}

The following example illustrates the failure of the pseudoradial analogue
of Corollary \ref{sutc} in the $\mathbf{PsRad}$ category.

\begin{example}
\emph{\label{kcnou}Let }$\alpha $\emph{\ be an uncountable, unbounded
well-ordered set with the discrete topology and attach two unrelated maximal
points. In particular, let $X=\alpha \cup \{x,y\}$ with $x\neq y,$ declare $%
\alpha <x$ and $\alpha <y$ and let $(a,x]$ and $(a,y]$ be basic open sets
for $a\in \alpha .$ Then $X$ is a pseudoradial KC space but $X$ does \textit{%
not} have unique pseudoradial convergence.}
\end{example}

The following example shows $\mathbf{SPsRad}$ is a proper subcategory of $%
\mathbf{PsRad}\cap \mathbf{CG} $

\begin{example}
\emph{\label{strict1} Let $X$ be an uncountable, well-ordered set with the
discrete topology and let $Y=X\cup \{\infty \}$ denote the Alexandroff
compactification of $X.$ Then $Y$ is a compact Hausdorff pseudoradial space,
but $Y$ is not strongly pseudoradial.}
\end{example}

\begin{proof}
Since $Y$ is a compact Hausdorff space, $A$ is not closed in $Y$ iff $A$ is
not compact. Thus $id:Y\rightarrow Y$ shows $Y$ is compactly generated.

To see that $Y$ is pseudoradial, suppose $A\subset Y$ and $A$ is not closed.
Then $\infty $ is a limit point of $A.$ Notice $A\cup \infty $ is a
well-ordered set. Let $A^{\prime }\cup \infty ^{\prime }=A\cup \infty $ with
the directed topology and consider the inclusion function $j:A^{\prime }\cup
\infty ^{\prime }\rightarrow Y\cup \infty $. If $U\subset Y$ is an open set
such that $\infty \in U,$ then $Y\backslash U$ is finite. Select $y\in Y$
such that $Y\backslash U<\{y\}.$ Hence if $y<a$ and $a\in A$ then $j(a)\in
U. $ Thus $Y$ is transfinite sequential.

To see that $Y$ is not strongly pseudoradial , suppose $\alpha $ is a
noncompact well-ordered space and suppose $f:\alpha \cup \{\infty _{\alpha
}\}\rightarrow Y$ is a map such that $f(\alpha )\subset A.$ Note if $i\in
\alpha $ then $f([0,i])$ is a compact subset of the discrete space $A,$ and
hence $f([0,i])$ is finite.

To obtain a contradiction suppose for each integer $N\geq 1$ there exists $%
i_{N}\in \alpha $ such that $\left| f([0,i_{N}])\right| =N.$ Then $%
i_{1}<i_{2}......$

Since $X$ is uncountable there exists a limit $i\in \alpha $. Then $\left|
f([0,i])\right| >N$ for each $N$. However $\left| f([0,i])\right| $ is
finite and we have a contradiction. Since $f$ is continuous at $\infty
_{\alpha }$ we conclude $f(\infty _{\alpha })\in f(\alpha )$ and hence $Y$
is not strongly pseudoradial.
\end{proof}

Every strongly pseudoradial space is compactly generated, however,
pseudoradial spaces need not be compactly generated, as illustrated in the
following example.

\begin{example}
\emph{\label{tfnotcg} $(\mathbf{\mathbf{PsRad}}\backslash \mathbf{CG}\neq
\emptyset )$ Suppose $\alpha $ is a minimal, uncountable well-ordered set
and $X=\alpha \cup \{\infty _{\alpha }\}$ with the directed topology
generated by sets $\{\beta \}$ with $\beta \in \alpha $ and $(\beta ,\infty
_{\alpha }]$. Then $X$ is pseudoradial but is not compactly generated. }
\end{example}

\begin{proof}
Suppose $A\subset X$ and $A$ is not closed in $X$. Then $\overline{A}%
\backslash A\neq \emptyset $ and hence $\{\infty _{\alpha}\}=\overline{A}%
\backslash A$ since $\infty _{\alpha}$ is the only limit point of $X.$

To see that $X\in \mathbf{PsRad}$ let $Y=A\cup \{\infty _{\alpha}\}$ with
the subspace topology. Note $Y$ is a well-ordered set, $A$ is unbounded in $%
Y,$ and $Y$ enjoys the directed topology. Thus inclusion $id:Y\rightarrow X$
is continuous, $id(A)\subset A$ and $id(\infty _{\alpha})\notin A.$

To see that $X\notin \mathbf{CG}$, note $\alpha $ is not closed in $X.$
Suppose $K$ is a compact $T_{2}$ space and $f:K\rightarrow X$ is a map. Let $%
B=f^{-1}(\alpha ).$ It suffices to prove $B$ is closed in $K.$ Note $f(K)$
is compact. To obtain a contradiction suppose $f(K)$ is infinite. Obtain $%
j_{1}$ minimal in $f(K)$ and note $j_{1}\in \alpha .$ Suppose $n>1$ and $%
j_{1}<j_{2}...<j_{n-1}$ have been selected such that $j_{i}\in \alpha \cap
f(K).$ Obtain $j_{n}\in f(K)$ minimal such that $j_{n-1}<j_{n}.$ Note $%
j_{n}\in \alpha .$ Let $C=\{j_{1},j_{2},...\}.$ Since $\alpha $ is
uncountable there exists $l\in \alpha $ such that $C<\{l\}.$ Note $f(K)$ is
Hausdorff since $X$ is Hausdorff, and note $C$ is a closed subspace of $X.$
Hence $C$ is a closed subspace of $f(K)$ and thus $C$ is compact. However $C$
is an infinite space with the discrete topology, and the thus $C$ is not
compact, and we have a contradiction. Thus $f(K)$ is a finite set. Let $%
f(K)\backslash \{\infty _{\alpha }\}=\{k_{1},k_{2},..,k_{n}\}.$ Since $X$ is
Hausdorff, $\{k_{i}\}$ is closed for each $i.$ Thus, since $f$ is
continuous, $B$ is the union of finitely many closed sets $f^{-1}\{k_{i}\}.$
Hence $f^{-1}(\alpha )$ is closed. Thus $X\notin \mathbf{CG}$.
\end{proof}

\begin{lemma}
\label{nots}Suppose $X$ is a Hausdorff space and there exists a countably
infinite set $A=\{a_{1},a_{2},..\}\subset X$ such that $A$ is not closed,
and such that each convergent sequence in $A$ is eventually constant. Then $%
X $ is not pseudoradial.
\end{lemma}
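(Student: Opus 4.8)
The plan is to argue by contradiction: assume $X$ is a transfinite sequential space and derive a contradiction from the defining properties of $A$. Since $A$ is not closed, Definition \ref{deftf} supplies an unbounded well-ordered set $J$ and a function $f:\jinf\to X$, continuous for the directed topology, with $f(J)\subseteq A$ and $p:=f(\infj)\notin A$. The first observation I would record is that in the directed topology every point of $J$ is isolated, so continuity of $f$ is really a statement about the single point $\infj$: for every open $U\ni p$ there is $j_U\in J$ with $f\big((j_U,\infj]\big)\subseteq U$. The second, and decisive, observation is that $f(J)\subseteq A$ is countable, so $J=\bigcup_{a\in f(J)}f^{-1}(a)$ is a countable union of fibres. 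This reduces everything to how these countably many fibres sit inside the unbounded well-order $J$.

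I would then split into two cases according to this fibre structure. In the first case some fibre $f^{-1}(a)$ is cofinal (equivalently, unbounded above) in $J$. Then for each open $U\ni p$ I choose $j_U$ as above and pick $j\in f^{-1}(a)$ with $j>j_U$; continuity forces $a=f(j)\in U$. Hence $a$ lies in every neighbourhood of $p$, i.e. $p\in\overline{\{a\}}$, and since $X$ is Hausdorff (hence $T_1$) the point $a$ is closed, giving $p=a\in A$ --- contradicting $p\notin A$.

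In the remaining case every fibre is bounded in $J$. Fixing an upper bound $\beta_a\in J$ for each of the countably many fibres, I note that the countable set $\{\beta_a\}$ cannot itself be bounded (otherwise $J=\bigcup_a f^{-1}(a)$ would be bounded, contradicting that $J$ is unbounded), so it is cofinal. From it I extract a nondecreasing cofinal sequence $j_1\le j_2\le\cdots$; by continuity at $\infj$ together with cofinality, the sequence $\big(f(j_n)\big)$ lies in $A$ and converges to $p$. The hypothesis that every convergent sequence in $A$ is eventually constant then forces $\big(f(j_n)\big)$ to be eventually equal to some $a\in A$, and uniqueness of limits in the Hausdorff space $X$ gives $p=a\in A$, again contradicting $p\notin A$.

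The step I expect to carry the real content is the cofinal-fibre case: this is exactly the situation (forced when $J$ has uncountable cofinality) in which no convergent sequence along $f$ is available, so the ``eventually constant'' hypothesis cannot be applied directly. The resolution is to abandon sequences there and argue purely with neighbourhoods, using $T_1$-ness to collapse $p\in\overline{\{a\}}$ to $p=a$. The bounded-fibre case is where the sequential hypothesis is genuinely invoked, and the only mild subtlety there is the counting argument --- that countably many bounded fibres cannot exhaust an unbounded well-order --- which is what manufactures the cofinal sequence.
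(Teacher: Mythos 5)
Your proposal is correct and follows essentially the same route as the paper: the paper also rules out cofinal fibres via Hausdorffness (your Case 1 appears there as the observation that $\sup f^{-1}(a_n)<\infty_J$), then uses the countability of $A$ and the boundedness of all fibres to produce a nondecreasing cofinal sequence of running maxima, along which continuity and the eventually-constant hypothesis force $f(\infty_J)\in A$. The only difference is organizational --- you present the two situations as an explicit case split rather than as successive steps of one argument.
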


\begin{proof}
Let $\alpha$ be an unbounded, well-ordered set and $\alpha\cup \{\infty
_{\alpha}\}$ has the directed topology. To obtain a contradiction suppose $%
f:\alpha\cup \{\infty _{\alpha}\}\rightarrow X$ is a map such that $%
f(\alpha)\subset A$ and $f(\infty _{\alpha})\notin A.$ Let $%
S_{n}=f^{-1}(a_{n})$ and let $k_{n}=\sup S_{n}.$ Note the singleton $%
f(S_{n})\subset A$. Hence, since $X$ is a $T_{2}$ space, $f(\overline{S_{n}}%
)=f(S_{n}).$ Thus $k_{n}<\infty _{\alpha}$ for all $n$ (since otherwise $%
f(\infty _{\alpha})\in A$). Let $s_{n}=\max \{k_{1},..,k_{n}\}.$ Note $%
s_{1}\leq s_{2}....$ and $s_{n}<\infty _{\alpha}.$ Let $s=\sup \{s_{n}\}$
and note $s=\infty _{\alpha}$ (since otherwise we obtain the contradiction $%
f((s,\infty _{\alpha})))\cap A=\emptyset $). By construction, the sequence $%
s_{n}\rightarrow \infty _{\alpha}.$ By continuity of $f,$ $%
f(s_{n})\rightarrow f(\infty _{v}).$ Thus $\{f(s_{n})\}$ is a convergent
sequence in $A$ and hence $\{f(s_{n})\}$ is eventually constant. Thus $%
f(\infty _{\alpha})\in A$ and we have a contradiction.
\end{proof}

The following example shows $\mathbf{CG}\backslash \mathbf{PsRad}\neq
\emptyset $

\begin{example}
\label{cgnottf} \emph{Let $\omega $ denote the natural numbers with the
discrete topology and let $X=\beta \omega ,$ the Stone-\v{C}ech
compactification of $\omega$ \cite{Engelking}. Then $X$ is a compactly
generated space but is not pseudoradial.}
\end{example}

\begin{proof}
Since $\beta \omega $ is compact Hausdorff, it is compactly generated. Let $%
A=\omega \subset X.$ By construction, $\omega $ is not closed in $\beta
\omega $. Each bounded real valued function $f:\omega \rightarrow \mathbb{R}$
is the restriction of a map $f:\beta \omega \rightarrow \mathbb{R}.$ Thus
each convergent sequence in $\omega $ is eventually constant. Otherwise,
there exists a convergent subsequence $n_{1}<n_{2}.....$ with $%
n_{i}\rightarrow x\in X$. Since $X$ is Hausdorff, the bounded real valued
map such that $f(n_{2i})=0$ and $f(m)=1$ if $m\neq n_{2i}$ cannot be
continuously extended to $\{x,\omega \}$ and we have a contradiction. Now
apply Lemma \ref{nots} to see that $\beta\omega$ is not pseudoradial.
\end{proof}

\section{\label{cat}On the category of strongly pseudoradial spaces}

Let $\mathbf{SPsRad}$ denote the full subcategory of the usual category $%
\mathbf{Top}$ of topological spaces and continuous functions whose objects
are strongly pseudoradial.

Recall that if $\mathscr{A}$ is any class of topological spaces, the \textit{%
coreflective hull} of $\mathscr{A}$ is the subcategory $CH(\mathscr{A})$ of $%
\mathbf{Top}$ whose objects are the spaces homeomorphic to a quotient of a
topological sum of the objects in $\mathscr{A}$. For instance, the
sequential category $\mathbf{Seq}$ is the coreflective hull of the singleton 
$\{\omega+1\}$ and the compactly generated category $\mathbf{CG}$ is the
coreflective hull of the class of all compact Hausdorff spaces.

\begin{proposition}
If $\mathscr{S}$ is the class of well-ordered spaces of the form $\alpha\cup
\{\infty_{\alpha}\}$ where $\alpha$ is a non-compact well-ordered space,
then $CH(\mathscr{S})=\mathbf{SPsRad}$.
\end{proposition}

\begin{proof}
It suffices to show that a space $X$ is strongly pseudoradial if and only if 
$X$ is the quotient of a topological sum of elements of $\mathscr{S}$.

If $X$ is strongly pseudoradial, then for each non-closed set $A\subset X$,
there is a non-compact ordinal $\beta_A$ and a map $f_A:\beta_A\cup
\{\infty_{\beta_A}\}\to X$ such that $f(\beta_A)\subset A$ and $%
f(\infty_{\beta_A})\notin A$. It is straightforward to see that $X$ is the
quotient of $\coprod_{A}\left(\beta_A\cup \{\infty_{\beta_A}\}\right)$ using
the maps $f_A$. The other direction follows from the fact that every
quotient of a strongly pseudoradial space is strongly pseudoradial. The
proof of this fact is the same as the well-known proof that the quotient of
a sequential space is sequential.
\end{proof}

\begin{corollary}
If $\mathscr{O}$ is the class of well-ordered spaces, then $CH(\mathscr{O})=%
\mathbf{SPsRad}$.
\end{corollary}

\begin{proof}
Certainly $\mathscr{S}\subset \mathscr{O}$, thus $CH(\mathscr{S})\subseteq
CH(\mathscr{O})$. Recall that every well-ordered space $\beta$ is a retract
of an element of $\mathscr{S}$ (see \cite[Proposition 2.5]{Buzyakova} for a
more general statement). Thus $CH(\mathscr{O})=CH(\mathscr{S})$.
\end{proof}

Since $\mathbf{SPsRad}$ is a coreflective hull of a class of compact
Hausdorff spaces, we are motivated to use Theorem 4.4 of \cite{BT} to show
that $\mathbf{SPsRad}$ inherits the structure of a Cartesian closed category
(which is, in fact, a convenient category of topological spaces, in the
sense of \cite{Steenrod}, since it contains the sequential category \cite[%
Proposition 7.3]{BT}). We follow the usual construction of an internal
product and function space for coreflective hulls \cite{BT}.

A subset $A\subset X$ is $\mathscr{S}$-closed if for every non-compact
well-ordered space $\alpha$ and map $f:\alpha\cup \{\infty_{\alpha}\}\to X$
with $f(\alpha)\subset A$, then $f(\infty_{\alpha})\in A$. Observe the $%
\mathscr{S}$-closed sets determine a topology on the underlying set of $X$
which is finer than the topology of $X$. Let $\mathbf{S}X$ denote the
resulting strongly pseudoradial space. The functor $\mathbf{S}:\mathbf{Top}%
\to \mathbf{SPsRad}$ is a coreflection in the sense that it is right adjoint
to the inclusion $\mathbf{SPsRad}\to\mathbf{Top}$.

This construction provides the internal categorical product $X\times_{%
\mathscr{S}}Y=\mathbf{S}(X\times Y)$ for $\mathbf{SPsRad}$. The internal
mapping spaces are constructed as follows. Given strongly pseudoradial
spaces $X$, $Y$, the set of all continuous functions $X\to Y$ is denoted $%
M(X,Y)$. For any non-compact well-ordered space $\alpha$, map $t:\alpha\cup
\{\infty_{\alpha}\}\to X$, and open set $U\subset Y$, let $W(t,U)=\{f\in
M(X,Y)|ft(\alpha\cup \{\infty_{\alpha}\})\subseteq U\}$. The $\mathscr{S}$%
\textit{-open topology} on $M(X,Y)$ is the topology generated by all of the
subbasic sets of the form $W(t,U)$. Let $M_{\mathscr{S}}(X,Y)$ denote $%
M(X,Y) $ with the $\mathscr{S}$-open topology.

It is not necessarily true that $M_{\mathscr{S}}(X,Y)$ is strongly
pseudoradial. Therefore, we take the function space in $\mathbf{SPsRad}$ to
be the coreflection $\mathbf{S}M_{\mathscr{S}}(X,Y)$. We arrive at the main
result of this section: $\mathbf{SPsRad}$ is Cartesian closed coreflective
subcategory of $\mathbf{Top}$. This result is in contrast with the fact that
the category $\mathbf{PsRad}$ of pseudoradial spaces is not Cartesian closed 
\cite{Cincura}. Moreover, since $\mathbf{SPsRad}$ contains the sequential
category, $\mathbf{SPsRad}$ is a ``convenient category" in the sense of \cite%
{Steenrod}.

\begin{theorem}
\label{cartesianclosed} The category $\mathbf{SPsRad}$ with $\mathscr{S}$%
-product $X\times _{\mathscr{S}}Y$ and function space $\mathbf{S}M_{%
\mathscr{S}}(X,Y)$ is Cartesian closed, i.e. for any strongly pseudoradial
spaces $X,Y,Z$, there is a natural homeomorphism 
\begin{equation*}
\mathbf{S}M_{\mathscr{S}}(X,\mathbf{S}M_{\mathscr{S}}(Y,Z))\cong \mathbf{S}%
(X\times _{\mathscr{S}}Y,Z).
\end{equation*}
\end{theorem}

\begin{proof}
The theorem follows directly from Theorem 4.4 of \cite{BT} once the
following two conditions are verified: 1. for each $\alpha\cup\{\infty_{%
\alpha}\}$, $\beta\cup\{\infty_{\beta}\}\in \mathscr{S}$, the direct product 
$(\alpha\cup\{\infty_{\alpha}\})\times (\beta\cup\{\infty_{\beta}\})$ is
strongly pseudoradial and 2. $\mathscr{S}$ is a regular class of spaces \cite%
[Definition 2.2]{BT} ($\mathscr{S}$ is regular if for each element $%
\gamma\in S$ with $S\in \mathscr{S}$, every neighborhood $U$ of $\gamma$ in $%
S$ contains a closed neighborhood $C$ for which there is a surjection $%
s:B\rightarrow C$ with $B\in \mathscr{S}$.

\begin{enumerate}
\item Suppose $\alpha\cup \{\infty _{\alpha}\}$, $\beta\cup \{\infty
_{\beta}\}\in \mathscr{A}$ and without loss of generality that $\beta<\alpha$%
. Since $(\alpha\cup \{\infty _{\alpha}\})\times (\beta\cup \{\infty
_{\beta}\})$ is a compact, it is a closed subset of $(\alpha\cup \{\infty
_{\alpha}\})^2$. Since $(\alpha\cup \{\infty _{\alpha}\})^2$ is strongly
pseudoradial (See Lemma \ref{jtimesj} below) and $\mathbf{SPsRad}$ is closed
under taking closed subsets, $(\alpha\cup \{\infty _{\alpha}\})\times
(\beta\cup \{\infty _{\beta}\})$ is strongly pseudoradial.

\item Suppose $\alpha\cup \{\infty _{\alpha}\}\in \mathscr{S}$. If $%
\gamma\in \alpha\cup \{\infty _{\alpha}\}$ is an isolated point, we set $%
B=\alpha\cup \{\infty _{\alpha}\}$ and the constant map $s:B\to C=\{\gamma\}$
suffices. If $\gamma$ is a limit point of $\alpha\cup \{\infty _{\alpha}\}$,
we may assume $U=(\gamma_{0},\gamma]$ for $\gamma_{0}<\gamma$. For any $%
\gamma_0<\gamma ^{\prime}<\gamma$, we have $[\gamma
^{\prime},\gamma]=[\gamma ^{\prime},\gamma)\cup \{\gamma\}\in \mathscr{S}$.
Thus we set $B=C=[\gamma ^{\prime},\gamma]$ and let $s:B\to C$ be the
identity map.
\end{enumerate}
\end{proof}

The main difficulty in the proof of the above theorem is verifying that the
product of two well-ordered spaces is strongly pseudoradial. The following
technical lemma is required for the proof of Lemma \ref{jtimesj}.

\begin{lemma}
\label{ketjj}Suppose $K$ is a compact, well-ordered space with minimal
element $0.$ Suppose $(M,m)\in K\times K.$ Suppose $B\subset \lbrack
0,M]\times \lbrack 0,m]$ such that for all $k<M,$ $([0,k]\times \lbrack
0,m])\cap B$ is closed in $[0,M]\times \lbrack 0,m]$ and such that $%
\emptyset =(\{M\}\times \lbrack 0,m])\cap B=([0,M]\times \{m\})\cap B.$
Suppose $(M,m)\in \overline{B}\backslash B$. Then there exists a limit point 
$l\in \lbrack 0,M]$ and a map $f:[0,l]\rightarrow \lbrack 0,M]\times \lbrack
0,m]$ such that $f([0,l))\subset B$ and $f(l)=(M,m).$
\end{lemma}

\begin{proof}
We will define $f:[0,l]\rightarrow [0,M]\times [0,m]$ so that if $%
f(k)=(x(k),y(k))$ then each of the maps $x$ and $y$ is strictly increasing.

To achieve this, at each stage of the definition of $f$ we make as little
strict progress in the direction of $[0,M]$ as possible, while guaranteeing
positive progress in the direction of $[0,m].$ Thus, to implement the
transfinite recursive definition of $f,$ if $k$ is not a limit point of $%
[0,M],$ (and working within $B$) $f(k)$ is defined so that ``starting at $%
f(k-1)$ we move our current abscissa as little as possible strictly to the
right subject to the demand that strict vertical progress is possible at the
new abscissa. Then, having selected our new abscissa, we then claim as much
vertical progress as possible. If $k$ is a limit point of $[0,M]$ then
continuity of $f|_{[0,k)}$ (and compactness of $[0,M]\times \lbrack 0,m]$)
forces the definition of $f(k)$ to be the unique value such that $f|_{[0,k]}$
is continuous.

Before defining $f,$ we build a few basic observations following directly
from our hypotheses and previous observations.

Observation 0: For all $k<M,$ $(\{k\}\times \lbrack 0,m])\cap B$ is closed
in $[0,M]\times \lbrack 0,m].$

Observation 1: $M$ is a limit point of $[0,M].$ ( To obtain a contradiction,
suppose otherwise. Then $\{M\}$ is open in $[0,M],$ and hence, since $(M,m)$
is a limit point of $B,$ the open set $\{M\}\times \lbrack 0,m]$ contains a
point $(M,y_{m})\in B$ such that $(M,y_{m})\neq (M,m),$ contradicting the
hypothesis that $\emptyset =(\{M\}\times \lbrack 0,m])\cap B).$

Observation 2: By a symmetric argument applied to Observation 1, $m$ is a
limit point of $[0,m].$

Observation 3: By Observations 1 and 2, basic open sets $U\times V$ of $%
[0,M]\times \lbrack 0,m]\ $containing $(M,m)$ are of the form $(a,M]\times
(b,m]$ with $a<M$ and $b<m.$

Observation 4: If $W$ is an open set of $[0,M]\times \lbrack 0,m]$ such that 
$(M,m)\in W$ then (since $(M,m)$ is a limit point of $B$) Observation 3
ensures there exists $(x,y)\in W\cap B$ such that $x<M$ and $y<m$ and also,
for each $(x,y)\in W\cap B$ there exists $(x^{\ast },y^{\ast })\in W\cap B$
such that $x<x^{\ast }<M$ and $y<y^{\ast }<m.$

For each $k\in \lbrack 0,M]$ define $B_{k}=(\{k\}\times \lbrack 0,m])\cap B$%
. If $B_{k}\neq \emptyset $ (by Observation 0) let $m_{k}$ be minimal such
that $B_{k}\subset \{k\}\times \lbrack 0,m_{k}]$ (Thus $(k,m_{k})$ is the
`maximal' element of $B_{k}$).

Note $B\neq \emptyset $ since $\overline{B}\neq \emptyset .$ Obtain $%
x_{0}\in \lbrack 0,M]$ minimal such that $B_{x_{0}}\neq \emptyset .$ Define $%
f(0)=(x_{0},m_{0})$ and let $y_{0}=m_{0}.$ By hypothesis $x_{0}<M$ and $%
y_{0}<m.$

Suppose $k\in \lbrack 0,M]$ and $f(i)\in B$ for all $i<k$ so that all of the
following hold:

i) If $i<k$ then $f(i)=(x_{i},y_{i})\in B.$

ii) If $i<k$ then $i\leq y_{i}$ and $i\leq x_{i}.$

iii) If $i<j<k$ then $x_{i}<x_{j}<M$ and $y_{i}<y_{j}<m.$

iv) $f|_{[0,k)}$ is continuous.

If $k-1$ exists obtain $x_{k}\in \lbrack 0,M]$ minimal such that $%
x_{k-1}<x_{k}<M$, $B_{k}\neq \emptyset $, and $y_{k-1}<m_{k}<m$. Define $%
f(k)=(x_{k},m_{k})$ and let $y_{k}=m_{k}.$

To see that $f(k)$ is well defined let $W=(x_{k-1},M]\times (y_{k-1},m]$.
Observation $4$ ensures the existence of the desired $f(k).$ Conditions i)
and iii) are preserved by definition. To check condition ii), let $i=k-1.$
Thus $k-1\leq x_{k-1}$ and $k-1\leq y_{k-1}.$ Thus $(k-1)+1\leq x_{k}$ and $%
(k-1)+1\leq y_{k}.$ For condition iv), notice $[0,k+1)=[0,k)\cup \{k\}$ and $%
[0,k)=[0,k-1].$ Thus $[0,k-1)$ is the union of two disjoint closed sets and
hence continuity of $f|_{[0,k+1)}$ follows from the familiar pasting from
general topology.

If $k-1$ does not exist then $k$ is a limit point of $K,$ and define $%
f(k)=(\sup_{i<k}\{x_{i}\},\sup_{i<k}\{y_{i}\})=(x_{k},y_{k}).$ Condition
iii) and the l.u.b. property of $K$ \ ensure $f(k)$ is well defined.

To check continuity of $f|_{[0,k]}$ suppose $W$ is a basic open set of $%
[0,M]\times \lbrack 0,m].$ Let $U=f|_{[0,k]}^{-1}(W).$ To check iv) If $%
(M,m)\notin W$ then continuity of $f|_{[0,k)}$ ensures $U$ is open in $[0,k)$
and, (since $[0,k)$ is open in $[0,k]$), $U$ is open in $[0,k].$ Suppose $%
(M,m)\in W.$ Then (since $f|_{[0,k)}$ is increasing) there exists $a\in
\lbrack 0,M)$ such that $f_{[0,k)}^{-1}(W)=(a,k).$ Thus $%
f_{[0,k]}^{-1}(W)=(a,k)\cup \{k\}=(a,k]$. Since $(a,k]$ is open in $[0,k],$
we conclude $f|_{[0,k]}$ is continuous. By definition $%
f|_{[0,k+1)}=f|_{[0,k]}$ and thus condition iv) is preserved since $%
f|_{[0,k+1)}$ is continuous.

Condition ii) for $i<k$ combined with continuity of $f|_{[0,k]}$ ensure
condition ii) is preserved for $i\leq k+1.$

By definition $f(k)=(x_{k},y_{k}).$ Preservation of the remaining conditions
depend on whether $f(k)\in B$ or not.

Case 1. If $f(k)\in B$ then by hypothesis of the Lemma, $x_{i}<M$ and $%
y_{i}<m.$ Moreover, since $\{x_{i}\}$ and $\{y_{i}\}$ for $i\leq k$ are
strictly transfinite sequences, condition iii) is preserved.

Case 2. Suppose $f(k)\notin B.$ Then, by continuity of $f,$ $f(k)$ is a
limit point of $B$.

Observation 5 ensures $(M,m)$ is the only limit point of $B,$ and hence $%
f(k)=(M,m)$. Let $l=k$ and the Lemma at hand is proved.
\end{proof}

\begin{lemma}
\label{jtimesj} If $\alpha $ is a non-compact well-ordered space, then the
product $(\alpha\cup\{\infty_{\alpha}\})\times
(\alpha\cup\{\infty_{\alpha}\})$ (with the standard product topology) is
strongly pseudoradial.
\end{lemma}

\begin{proof}
Suppose $A$ is a non-closed subset of $(\alpha\cup\{\infty_{\alpha}\})\times
(\alpha\cup\{\infty_{\alpha}\})$.

We seek a well-ordered subspace $\beta\subset \alpha$ such that $\overline{%
\beta}\backslash \beta=\{\infty_{\beta}\}$ contains only the minimum element
of $\{\gamma\in \alpha|\gamma>\beta\}$ and a map $f:\beta\cup\{\infty_{%
\beta}\}\to (\alpha\cup\{\infty_{\alpha}\})\times
(\alpha\cup\{\infty_{\alpha}\})$ such that $f(\beta)\subset A$ and $f(\infty
_{\beta})\notin A.$

First we reduce as follows to the case that each ``vertical or horizontal
slice'' of $A$ is closed in $(\alpha\cup\{\infty_{\alpha}\})\times
(\alpha\cup\{\infty_{\alpha}\}).$ For each $x\in
\alpha\cup\{\infty_{\alpha}\}$ define $B_{x}=A\cap (\{x\}\times
(\alpha\cup\{\infty_{\alpha}\})).$ Note each subspace $\{x\}\times
(\alpha\cup\{\infty_{\alpha}\})$ is closed in $(\alpha\cup\{\infty_{\alpha}%
\})\times (\alpha\cup\{\infty_{\alpha}\})$ and is also canonically
homeomorphic to the strongly pseudoradial space $\alpha\cup\{\infty_{\alpha}%
\}.$ Thus, if there exists $x\in \alpha\cup\{\infty_{\alpha}\}$ such that $%
B_{x}$ is not closed in $(\alpha\cup\{\infty_{\alpha}\})\times
(\alpha\cup\{\infty_{\alpha}\})$, then there exists a non-compact
well-ordered subspace $\beta\subset \alpha\cup\{\infty_{\alpha}\}$ and a map 
$f:\beta\cup \{\infty _{\beta}\}\rightarrow \{x\}\times
(\alpha\cup\{\infty_{\alpha}\})$ such that $f(\beta)\subset B_{x}$ and $%
f(\infty _{\beta})\notin B_{x}.$ Since $\{x\}\times
(\alpha\cup\{\infty_{\alpha}\})$ is closed in $(\alpha\cup\{\infty_{\alpha}%
\})\times (\alpha\cup\{\infty_{\alpha}\})$, it follows that $f(\infty
_{\beta})\in \{x\}\times (\alpha\cup\{\infty_{\alpha}\}).$ Hence $f(\infty
_{\beta})\notin A$ and we have the desired map $f.$

After applying a symmetric argument to slices of the form $%
(\alpha\cup\{\infty_{\alpha}\})\times \{y\}$ we have reduced to the case
that the subspaces $(\{x\}\times (\alpha\cup\{\infty_{\alpha}\}))\cap A$ and 
$(\alpha\cup\{\infty_{\alpha}\})\times \{y\})\cap A$ are closed in $%
(\alpha\cup\{\infty_{\alpha}\})\times (\alpha\cup\{\infty_{\alpha}\})$ for
all $\{x,y\}\subset (\alpha\cup\{\infty_{\alpha}\}).$

Let $K$ be a compact set. Note $(\{0\}\times K)\cap A$ is closed and $%
(K\times K)\cap A$ is not closed. Hence there exists $M\in K$ minimal such
that $([0,M]\times K)\cap A$ is not closed. By our assumptions $([0,M]\times
\{0\})\cap A$ is closed. Thus there exists $m\in K$ minimal such that $%
([0,M]\times \lbrack 0,m])\cap A$ is not closed in $K\times K.$

Let $C=([0,M]\times \lbrack 0,m])\cap A.$ Since $[0,M]\times \lbrack 0,m]$
is closed in $K\times K,$ $C$ is not closed in $[0,M]\times \lbrack 0,m].$
Hence $\overline{C}\backslash C\neq \emptyset $ and $\overline{C}\backslash
C\subset \lbrack 0,M]\times \lbrack 0,m].$ Suppose $(x,y)\in \overline{C}%
\backslash C.$ To obtain a contradiction suppose $x<M.$ Then $(x,y)$ is a
limit point of the closed set $([0,x]\times \lbrack 0,m])\cap A$ and hence $%
(x,y)\in A,$ a contradiction. Thus $x=M.$ Suppose, to obtain a contradiction 
$y<m.$ Then $(M,y)$ is a limit point of the closed set $([0,M]\times \lbrack
0,y])\cap A$ and hence $(M,y)\in A,$ a contradiction. Hence $\overline{C}%
\backslash C=\{(M,m)\}.$

Note $(M,m)$ is not in the closed set $(([0,M]\times \{m\})\cup (\{M\}\times
\lbrack 0,m]))\cap C.$ Obtain $a\in \lbrack 0,M)$ and $b\in \lbrack 0,m)$
such that $([a+1,M]\times \{m\})\cap C=(\{M\}\times \lbrack b+1,m])\cap
C=\emptyset .$ Note $[a+1,M]\times \lbrack b+1,m]$ is clopen in $[0,M]\times
\lbrack 0,m].$ Let $B=[a+1,M]\times \lbrack b+1,m]\cap C.$ Note $(K,B,M,m)$
satisfy the hypothesis of Lemma \ref{ketjj} and obtain a limit point $l\in
\lbrack 0,M]$ and a map $f:[0,l]\rightarrow \overline{B}$ such that $%
f([0,l))\subset B\subset A$ and $f(l)\notin B.$ Let $\beta =[0,l)$ and $%
\infty _{\beta }=l.$ By definition $\overline{B}\backslash B\subset 
\overline{C\backslash }C\subset \overline{A}\backslash A$ and hence $f(l)\in
A$.
\end{proof}

\section{Compactness and related properties in $\mathbf{SPsRad}$}

\label{compactsection}

We seek to generalize Proposition \ref{Franklinprop} below, a basic fact in 
\textbf{SEQ. }Our main result is Theorem \ref{compactthm}. Lemma \ref{adjust}
shows Theorem \ref{compactthm} is a generalization of Proposition \ref%
{Franklinprop}, provided we restrict our attention to so-called \textit{UW} spaces. Corollary \ref{compactcor} generalizes in $%
\mathbf{SPsRad}$ the fact that compact weakly Hausdorff sequential spaces
are sequentially compact.

\begin{proposition}
\label{Franklinprop} If $X$ is a sequential space then $X$ is countably
compact iff $X$ is sequentially compact \cite{Franklin1}.
\end{proposition}

The following strengthening of ``sequentially compact'' is strict as shown
by the minimal uncountable well ordered space.

\begin{definition}
The space $X$ is \textbf{strongly pseudoradially compact} if for each
noncompact well ordered space $\alpha $ and each map $f:\alpha \rightarrow
X, $ there exists $\beta $ closed and cofinal in $\alpha $ so that $f|\beta $
is continuously extendable at $\infty _{\alpha }.$
\end{definition}

Corollary \ref{compactcor} shows every compact weakly Hausdorff space $X\in 
\mathbf{SPsRad}$ is strongly pseudoradially compact, but the proof exploits
the fact that maps of compact ordinals into $X$ have closed image. To absorb
various difficulties created when $X$ is not weakly Hausdorff we adjust our
definitions as follows.

\begin{definition}
Suppose $X$ is a space and $\alpha $ is a noncompact well ordered space. A
map $f:\alpha \rightarrow X$ is \textbf{decent} if there exists a map $%
g_{f}:\alpha \rightarrow X$ so that $g(i)\in \overline{\{f(i)\}}$ for all $%
i\in \alpha ,$ and $g(C)$ is closed for all compact $C\subset \alpha .$
\end{definition}

\begin{definition}
If $X$ is a space declare $X$ \textbf{decently strongly pseudoradially
compact} if for each noncompact well ordered space $\alpha $ and each decent
map $f:\alpha \rightarrow X,$ there exists $\beta $ closed and cofinal in $%
\alpha $ so that $f|\beta $ is continuously extendable at $\infty _{\alpha
}. $
\end{definition}

Equivalent to the standard finite open subcover formulation, recall a space $%
X$ is compact iff $\emptyset \neq \cap _{i\in I}A_{i}$ for all collections
of closed sets $\{A_{i}\}$ with the finite intersection property. Weaker
than compact, the closed set formulation of ``countably compact'' is $%
\emptyset \neq \cap _{n=1}^{\infty }L_{n}$ for each nested sequence of
closed sets $...L_{3}\subset L_{2}\subset L_{1},$ and this motivates the
following definition.

\begin{definition}
The space $X$ is \textbf{decently compact} if for each decent map $f:\alpha
\rightarrow X$ then $\emptyset \neq \cap _{i\in \alpha }L_{i}$ if $L_{i}=%
\overline{\cup _{j=i}^{\infty _{\alpha }}f(j)}$.
\end{definition}

We observe basic facts about decent maps and decently compact spaces as
follows.

\begin{remark}
\label{decentrem}Suppose the map $g_{f}:\alpha \rightarrow X$ shows the map $%
f:\alpha \rightarrow X$ is decent. Then $\{g(i)\}$ is closed in $X$, since $%
\{i\}$ is compact. If $X$ is $T_{1}$ then $g_{f}=f$ since $\{f(i)\}=%
\overline{\{f(i)\}}.$ If $X$ is weakly Hausdorff then each map $f:\alpha
\rightarrow X$ is decent. If $X$ is compact then $X$ is decently compact. If 
$X$ is a space and the map $g_{f}:\alpha \rightarrow X$ shows $f$ is decent,
then $\cap _{i\in \alpha }(\overline{\cup _{j=i}^{\infty _{\alpha }}g_{f}(j)}%
)\subset \cap _{i\in \alpha }(\overline{\cup _{j=i}^{\infty _{\alpha }}f(j)}%
),$ (since $\cup _{j=i}^{\infty _{\alpha }}g_{f}(j)\subset \cup
_{j=i}^{\infty _{\alpha }}\overline{\{f(j)\}}\subset \overline{\cup
_{j=i}^{\infty _{\alpha }}f(j)}$). Since $g_{f}$ is decent, the space $X$ is
decently compact iff $\emptyset \neq \cap _{i\in \alpha }(\overline{\cup
_{j=i}^{\infty _{\alpha }}f(j)})$ for all decent maps $f:\alpha \rightarrow
X $ such that $\overline{\{f(i)\}}=\{f(i)\}$ for all $i\in \alpha .$
\end{remark}

The following elementary Lemma is used in the proof of Lemma \ref{tee1}.

\begin{lemma}
\label{contcheck}Suppose $X\in \mathbf{SPsRad}$, $Y$ is a space and $%
f:X\rightarrow Y$ is a function. Then $f$ is continuous if and only if $%
f\circ g$ is continuous whenever $\alpha $ is a noncompact well ordered
space, and $g:\alpha \cup \{\infty _{\alpha }\}\rightarrow X$ is a map.
\end{lemma}

\begin{proof}
If $f$ is continuous then the composition of maps $f\circ g$ is continuous.
If $f$ is not continuous obtain $A\subset Y$ closed so that $B=f^{-1}(A)$ is
not closed. Obtain a well ordered space $\alpha $ and map $g:\alpha \cup
\{\infty _{\alpha }\}$ so that $g(\alpha )\subset B$ and $g(\infty _{\alpha
})\notin B.$ Then $f\circ g$ is not continuous, since if $f\circ g$ were
continuous we obtain the contradiction $(f\circ g)^{-1}(A)=\alpha \cup
\{\infty _{\alpha }\}.$
\end{proof}

\begin{lemma}
\label{tee1}Suppose $X$ is strongly pseudoradial, and decently compact.
Suppose $f:\alpha \rightarrow X$ is decent and $f(i)=\overline{\{f(i)\}}$
for all $i\in \alpha .$ Then there exists $\beta $ closed and cofinal in $%
\alpha ,$ so that $f|\beta $ is continuously extendable to $f|\beta \cup
\{\infty _{\alpha }\}.$
\end{lemma}

\begin{proof}
Note if $g_{f}$ shows $f$ is decent then $g_{f}=f.$ In particular $f(C)$ is
closed for all compact $C\subset X.$ If $f^{-1}(x)$ is unbounded for some $%
x\in X,$ then $x=\overline{\{x\}},$ and letting $\beta =f^{-1}(x),$ we have $%
\beta $ closed and cofinal in $\alpha ,$ and $f|\beta $ is continuously
extendable mapping $\infty _{\beta }\rightarrow x.$ If $f^{-1}(x)$ is
bounded for all $x$ then by Lemmas \ref{mono} and \ref{reg} there exists a
regular cardinal $\alpha _{1}$ closed and cofinal in $\alpha $ so that $%
f|\alpha _{1}$ is one to one. Since $f|\alpha _{1}$ is decent, we may assume
wolog that $\alpha $ is a regular cardinal and $f$ is one to one. Observe $%
\emptyset =\cap _{i\in \alpha }f([i,\infty _{\alpha }))$ since $f$ is one to
one. To obtain a contradiction suppose there exists $\beta $ closed and
cofinal in $\alpha $ so that $f(\beta \cap \lbrack i,\infty _{\alpha }))$ is
closed in $X$ for all $i\in \alpha .$ Then $\emptyset =\cap _{i\in \alpha
}f(\beta \cap \lbrack i,\infty _{\alpha })),$ contradicting the fact that $X$
is decently compact. Thus there exists $i\in \alpha $ so that $f[i,\infty
_{\alpha })$ is not closed. Since $[i,\infty _{\alpha })$ is closed and
cofinal in $\alpha ,$ the subspace $[i,\infty _{\alpha })$ is a regular
cardinal and thus wolog we may assume $f(\alpha )$ is not closed in $X.$

Since $f(\alpha )$ is not closed in $X$, by Theorem \ref{sprchar}, obtain a
noncompact regular cardinal $\gamma $ and an extendable injective map $%
g:\gamma \rightarrow f(\alpha )$ so that $g(\infty _{\gamma })\notin
f(\alpha ).$ Define $h:\gamma \rightarrow \alpha $ via $h=f^{-1}\circ g.$
Let $\beta =h(\gamma ).$ The plan is to show $h$ is a closed homeomorphism
and $\beta $ is cofinal in $\alpha .$

Since $h$ is one-to-one, $\left\vert \gamma \right\vert \leq \left\vert
\alpha \right\vert .$ Thus, since $\gamma $ is a regular cardinal if $%
C\subset \gamma $ is bounded, $\left\vert C\right\vert <\left\vert \gamma
\right\vert \leq \left\vert \alpha \right\vert $ and in particular $h(C)$ is
bounded in $\alpha .$

To show that $h$ is continuous suppose $\kappa $ is a noncompact well
ordered space and $p:\kappa \cup \{\infty _{\kappa }\}\rightarrow \gamma $
is a map. Then $im(p)$ is compact and thus bounded. Thus, by the previous
paragraph, $im(h(p))$ is bounded in $\alpha .$ Obtain a compact $K\subset
\alpha $ so that $im(h(p))\subset K.$ By hypothesis $f|K$ is a closed
embedding. Thus $f^{-1}|f(K)$ is continuous. Hence $f^{-1}g\kappa $ is
continuous. Thus by Lemma \ref{contcheck} $h$ is continuous.

Suppose $A\subset \gamma $ and $B=h(A).$ If $A$ is compact then (since $h$
is continuous), $B$ is compact and hence $B$ is closed. If $A$ is unbounded
then $g(\infty _{\gamma })$ is a limit point of $g(A)$ and hence $g(\infty
_{\gamma })$ is a limit point of $f(B)=f(f^{-1}(g(A))).$ Thus $B$ is
unbounded since otherwise we obtain the contradiction $g(\infty _{\gamma
})\in f(\overline{B}).$ If $A$ is closed and unbounded, to see that $B$ is
closed, suppose otherwise and obtain minimal $l\in \overline{B}\backslash B.$
Let $B_{1}=[0,l]$ and $A_{1}=h^{-1}(B_{1}).$ Then $A_{1}$ is unbounded since
otherwise we obtain the contradiction $l\in h(A_{1})\subset B_{1}.$ Thus,
applying the same argument as shown above, we deduce $g(\infty _{\gamma })$
is a limit point of $g(A_{1})$ and obtain the contradiction $B_{1}$ is
unbounded.

Since $h:\gamma \rightarrow \beta $ is a homeomorphism $f|\beta $ is
continuously extendable, with $\infty _{\alpha }\rightarrow g(\infty
_{\gamma }).$
\end{proof}

\begin{lemma}
\label{extend}Suppose $\alpha $ is an unbounded well ordered space and $%
g:\alpha \cup \{\infty _{\alpha }\}\rightarrow X$ is a map such that $%
\{g(i)\}$ is closed in $X$ for all $i\in \alpha .$ Suppose $f:\alpha
\rightarrow X$ is a map such that $g(i)\in \overline{\{f(i)\}}$ for all $%
i\in \alpha .$ Then $F=f\cup \{\infty _{\alpha },g(\infty _{\alpha })\}$ is
continuous.
\end{lemma}

\begin{proof}
By hypothesis $F|\alpha $ is continuous. To check $F$ is continuous at $%
\infty _{\alpha }$ suppose $U$ is open in $X$ and $g(\infty _{\alpha })\in
U. $ By continuity of $g,$ obtain $K$ so that $g(i)\in U$ if $K\leq i.$ Thus 
$f(i)\in U$ if $K\leq i.$
\end{proof}

\begin{theorem}
\label{compactthm} If $X$ is strongly pseudoradial, then the following are
equivalent:
\end{theorem}

\begin{enumerate}
\item $X$ is decently strongly pseudoradially compact,

\item $X$ is decently compact.
\end{enumerate}

\begin{proof}
Suppose $f:\alpha \rightarrow X$ is decent as shown by the map $g_{f}.$

$(1)\Rightarrow (2)$ By (1), obtain $\beta $ closed and cofinal in $\alpha $
and obtain $y\in X$ so that so that $G=g_{f}|\beta \cup \{(\infty _{\alpha
},y)\}$ is continuous. Since $G$ is continuous at $\infty _{\alpha }$, $y\in 
\overline{g_{f}([i,\infty _{\alpha }))}$ for all $i\in \alpha .$ By Remark %
\ref{decentrem}, $y\in \overline{f([i,\infty _{\alpha }))}$ for all $i\in
\alpha .$ Thus $X$ is decently compact.

(2) $\Rightarrow $ (1) By Lemma \ref{tee1} obtain $\beta $ closed and
cofinal in $\alpha $ and obtain $y\in X$ so that so that $G=g_{f}|\beta \cup
\{(\infty _{\alpha },y)\}$ is continuous. By Lemma \ref{extend} $f|\beta
\cup \{\infty _{\alpha },y\}$ is continuous.
\end{proof}

\begin{corollary}
\label{compactcor}Suppose $X$ is strongly pseudoradial and weakly Hausdorff.
If $X$ is compact then $X$ is strongly pseudoradially compact.
\end{corollary}

\begin{proof}
Since $X$ is compact, $X$ is decently compact. Since $X$ is weakly Hausdorff
each map $f:\alpha \rightarrow X$ is decent. Now apply theorem \ref%
{compactthm}.
\end{proof}

\begin{example}
The main example \cite{OSt} of Ostaszewski shows the converse of Corollary %
\label{spc} is false. If $X$ is strongly pseudoradial, weakly Hausdorff and
strongly pseudoradially compact, then $X$ need not be compact.
\end{example}

\begin{proof}
The main space $X$ in \cite{OSt} is Hausdorff, sequential, sequentially
compact, but not compact. Since $X$ is sequential, $X$ is strongly
pseudoradial. To check that $X$ is strongly pseudoradially compact, suppose $%
\alpha $ is an unbounded well ordered space and $f:\alpha \rightarrow X$ is
a map. If $\alpha $ is countable obtain a sequence $\beta $ cofinal in $%
\alpha $ and apply sequential compactness to obtain a continuous extension
of $f|\beta .$ Suppose $\alpha $ is uncountable. If $f^{-1}(x)$ is unbounded
for some $x,$ let $\beta =f^{-1}(x)$ and obtain the extension $\infty
_{\alpha }\rightarrow x$. To obtain a contradiction suppose $f^{-1}(x)$ is
bounded for all $x.$ By Lemma \ref{mono} obtain $\beta $ closed and cofinal
in $\alpha $ so that $f|\beta $ is one to one. Since $X$ is Hausdorff, $f|C$
is an embedding for all compact $C\subset \beta .$ Thus $f(\beta )$ is
closed in $X$ (since if $f(\beta )$ is not closed in the sequential space $%
X, $ we obtain a cofinal sequence in the uncountable well ordered set $\beta 
$). Thus $f$ embeds $\beta $ onto a closed subspace $f(\beta )\subset X.$
Obtain a closed subspace $A\subset f(\beta )$ so that $A$ and $f(\beta
)\backslash A$ are uncountable. Then $X\backslash A$ is open. However both $%
X\backslash A$ and $A$ are uncountable, contradicting the fact (Lemma 1.3 
\cite{OSt}) that in $X,$ every open subspace is countable or cocountable.
\end{proof}

Unfortunately Theorem \ref{compactthm} is not a generalization of
Proposition \ref{Franklinprop} as shown by the following example, in which $%
X $ is vacuously decently compact since no map $f:\alpha \rightarrow X$ is
decent.

\begin{example}
\label{t0bad}Consider the countable set $X=\{1,2,3,...\}$ with topology
generated by the closed sets $[n,\infty )$. Then $X$ is sequential and
decently compact but $X$ is not sequentially compact.
\end{example}

Lemma \ref{adjust} effectively shows the phenomenon in Example \ref{t0bad}
is the only obstruction preventing a sequential decently compact space from
being sequentially compact. Thus Theorem \ref{compactthm} generalizes
Proposition \ref{Franklinprop} provided we restrict ourselves to spaces with
the following useful weak property, which we call the \textbf{UW property}
in the paper at hand.

\begin{definition}
The space $X$ has the UW \textbf{property }if for each $x\in X$ there exists 
$y\in X$ so that $y\in \overline{\{x\}}$ and $\overline{\{y\}}=\{y\}.$
\end{definition}

\begin{lemma}
If $X$ is compact or $T_{1},$ then $X$ has the UW property.
\end{lemma}

\begin{proof}
Suppose $x\in X.$ Obtain a maximal transfinite sequence (indexed by a well
ordered set) $x=x_{0},x_{1},x_{2},....$ so that $x_{i}\in \overline{\{x_{j}\}%
}\backslash \{x_{j}\}$ for all $j<i.$ Let $L_{i}=\overline{\{x_{i}\}}.$ If $%
X $ is compact $\emptyset \neq \cap L_{i}.$ Let $y\in \cap L_{i}.$ Note $%
y\in \overline{\{x\}},$ and by maximality $\overline{\{y\}}=\{y\}$ (since if
there exists $z_{j}\in \overline{\{y\}}\backslash \{y\}$ we obtain the
contradiction $y\notin L_{j}$). If $X$ is $T_{1}$ let $y=x.$
\end{proof}

\begin{lemma}
\label{adjust}Suppose the sequential space $X$ has the UW property. Then $X$
is sequentially compact iff $X$ is decently compact.
\end{lemma}

\begin{proof}
Suppose $X$ is sequentially compact and $f:\alpha \rightarrow X$ is decent
as shown by the map $g_{f}:\alpha \rightarrow X$. If there exists $x\in X$
so that $g_{f}^{-1}(x)$ is unbounded let $\beta _{1}=g_{f}^{-1}(x)$ and
observe $g_{f}|\beta _{1}$ is continuously extendable at $\infty _{\alpha }$%
. If no such $x$ exists obtain by Lemma \ref{mono} $\beta $ closed and
cofinal in $\alpha $ so that $g_{f}|\beta $ is one to one. By Lemma \ref{reg}
we may assume $\beta $ is a regular cardinal. If $\beta $ is countable then $%
\beta $ has the discrete topology. Since $X$ is sequentially compact there
exists $\beta _{1}$ closed and cofinal in $\beta $ so that $g_{f}|\beta _{1}$
is continuously extendable at $\infty _{\alpha }$. If $\beta _{1}$ is
uncountable we obtain a contradiction as follows. Since $X$ is sequential,
since $g_{f}|C$ is closed for all compact $C\subset \beta _{1},$ and since $%
\beta _{1}$ is uncountable, $g_{f}(\beta _{2})$ is closed for all $\beta
_{2} $ closed and cofinal in $\beta _{1}.$ In particular the uncountable
well ordered space $\beta _{1}$ is homeomorphic to the sequential space $%
g_{f}(\beta _{1})$ and we have a contradiction. Thus by Theorem \ref%
{compactthm} $X$ is decently compact.

Conversely suppose $X$ is decently compact. Consider the minimal infinite
well ordered space $\alpha =\{1,2,3....\}$ and suppose $f:\alpha \rightarrow
X$ is a map. Since $X$ has the UW property, obtain for each $n\in \alpha ,$ $%
y_{n}\in \overline{\{f(n)\}}$ so that $\{y_{n}\}=\overline{\{y_{n}\}}.$ Let $%
g_{f}(n)=y_{n}.$ The map $g_{f}$ shows $f$ is decent. Thus by Theorem \ref%
{compactthm} there exists $\beta $ closed and cofinal in $\alpha $ so that $%
f|\beta $ is continuously extendable at $\infty _{\alpha }.$ Thus $f|\beta $
is the desired convergent subsequence and hence $X$ is sequentially compact.
\end{proof}

We conclude with the following problem.

\begin{problem}
\label{t1prob}Suppose the compact $T_{1}$ space $X\in \mathbf{SPsRad.}$ Must 
$X$ be strongly pseudoradially compact?
\end{problem}

\end{document}